\newtheorem{theorem}{Theorem}
\newtheorem{lemma}{Lemma}
\begin{document}
 

\begin{quote}
\begin{center} 
\textbf{The Discrete Rado Number for} \(x_1 + x_2 + \dots + x_m + c = 2x_0\)
\bigskip

Tristin Lehmann \\ Tristin.Lehmann@sdstate.edu \\
and\\

Donald L. Vestal, Jr \\ Donald.Vestal@sdstate.edu
\bigskip

South Dakota State University 
\bigskip

\textbf{Abstract}
\end{center}

\par\noindent
For a positive integer \(m\) and a real number \(c\), let \(R = R(m,c,2)\) denote the discrete 2-color Rado number for the equation  \(x_1 + x_2 + \dots + x_m + c = 2x_0\).  
In other words, \(R\) is the smallest integer such that for any coloring of the integers \({1, 2, \dots, R}\), there exist numbers \(x_1, x_2, \dots, x_m, x_0\), all with the same color, such that \(x_1 + x_2 + \dots + x_m + c = 2x_0\).
In this article we show that if \(m \geq 2\) and \(c > 0\), then 
\[
  R(m, c, 2) = 
    \begin{cases}
	\infty 													&	\text{if \(m\) is even and \(c\) is odd} \\
	\big\lceil \frac{m}{2} \big\lceil \frac{m+c}{2} \big\rceil + \frac{c}{2} \big\rceil		&	\text{otherwise~.}
    \end{cases}
 \]

\par\noindent
For real numbers \(a\) and \(c\), we look at the 2-color Rado number for the equation \(x_1 + c = ax_0\). 
We show that if \(a > 1\) and \(c > 0\), then the 2-color continuous Rado number is
\[
  R_\mathbb{R}(1, c, a) = 
    \begin{cases}
	\infty 					&	\text{if  \( a=1 \) } \\
	\frac{c}{a-1}	  			&	\text{otherwise.}
    \end{cases}
 \]
 From this, we will show that the discrete Rado number is
 \[
   R(1, c, a) = 
    \begin{cases}
    	\frac{c}{a-1}	  			&	\text{if \( \left( a-1 \right)  \mid c\)} \\ 
	\infty 					&	\text{otherwise.}
    \end{cases}
 \]

Keywords: Ramsey Theory, Rado Number
\end{quote}


\section{Introduction} %

In 1916, Isaai Schur proved in \cite{Schur} that for any coloring $\Delta : \mathbb{N} \rightarrow \{ 0 , 1, \dots, t-1 \} $ of the positive integers using \(t \geq 1\) colors, there exist positive integers \(x_1, x_2, x_3\) with $\Delta (x_1 ) = \Delta (x_2 ) = \Delta (x_3)$ such that \(x_1 + x_2 = x_3\).  
Such a solution is called a monochromatic solution. 
Consequently, for a given number of colors \(t\), there exists a smallest integer \(S = S(t)\) such that for any $t$-coloring of $\{ 1, 2, \dots, S \}$, there must be a monochromatic solution to the equation  \(x_1 + x_2 = x_3\).   
It is known that \(S(1)=2, S(2)=5, S(3)=14, S(4)=45\).  
Beyond \(t \geq 5\), the Schur numbers are unknown. 

Richard Rado in \cite{Rado1, Rado2, Rado3} generalized Schur's results to systems of linear equations.  
For a system $L$ of equations, the $t$-color Rado number for the system is the smallest integer $R$ such that for any $t$-coloring of the integers $\{ 1, 2, \dots , R \}$, there exists a monochromatic solution to the system $L$. 
If no such integer $R$ exists, then we say the $t$-color Rado number for $L$ is infinite. 

These contributions of Schur and Rado to Ramsey theory were existential; however, in 1982 in \cite{BB}, Beutelspacher and Brestovansky found specific values for a family of 2-color Rado numbers. In particular, the 2-color Rado number for the equation \(x_1 + x_2 + \dots + x_{m-1} = x_m\), where \( (m \geq 3) \), is \(m^2-m-1\).  
In other words, if you color the positive integers using two colors, say red and blue, then you can color from 1 to \(m^2 - m - 2\) and avoid a monochromatic solution to the given equation; but no matter how you color the integers from 1 to \(m^2 - m -1\) you are guaranteed to end up with a monochromatic solution.
Note that in the case of \(m=3\) this is \(S(2): m^2 - m -1 = 5\). 

After this result, there were many variations involving different equations, pairs of equations, inequalities, nonlinear equations, and some results involving more than two colors.

Burr and Loo also investigated a variation of Shur's equation in \cite{BL} by adding a constant \(c\) to obtain the equation \(x_1 + x_2 + c = x_3\).  
They proved that for \(c \in \mathbb{Z} \) the 2-color Rado number is \(4c+5\) if \(c \geq 0\), and \(|c| - \big\lceil \frac{|c|-5}{5} \big\rceil\) if \(c<0\).

These two works were generalized by Schaal when he investigated the equation \(x_1 + x_2 + \dots + x_{m-1} + c = x_m\).  In \cite{Sch} he found the 2-color Rado number to be: 
\[
  R = 
    \begin{cases}
	\infty 					&	\text{for \(m\) even and \(c\) odd} \\
	m^2 - m - 1 + (m+1)c 		&	\text{for \(c\geq 0 \), and \(m\) is odd or \(c\) is even}
    \end{cases}
 \]

In \cite{SchVes}, Schaal and Vestal found the 2-color Rado number for the equation \( x_1 + x_2 + \dots + x_{m-1} = 2x_m \) to be: 
\[
  R = 
    \begin{cases}
	\big\lceil \frac{m-1}{2} \big\lceil \frac{m-1}{2} \big\rceil \big\rceil	& \text{if }m \geq 6 \\
	5		& \text{if } m = 5 \\
	4		& \text{if } m = 4\\
	1		& \text{if } m = 3 \\
	\infty	& \text{if } m = 2 \\
    \end{cases}
 \]
 
Similar to this equation Vestal found the 2-color continuous Rado number for the equation \(x_1 + x_2 + \dots + x_m = ax_0 \) with \(a \in \mathbb{Z}\), \(a \geq 2\), and \(m \geq a(a-1) \), to be \(\frac{m^2}{a^2} \). \cite{Ves}
Continuous Rado numbers invovle coloring all of the positive real numbers greater than or equal to 1, as opposed to the discrete case where only the positive integers are colored.  

In this article, we consider the equation 
\[
\mathcal{E} : x_1+ x_2 + \dots + x_m + c = 2x_0.
\]

Let \(R = R(m,c,2)\) denote the 2-color Rado number for $\mathcal{E}$; that is, the smallest integer \(R\) such that for any 2-coloring of the integers in \({1, 2, \dots, R}\), there exists a monochromatic solution to the equation \(\mathcal{E}\).
We will prove the following:

\begin{theorem}
Let $m$ be a positive integer with $m\geq 2$ and let $c$ be a real number with \(c \geq 0\). Then
$$ \begin{array}{cc} 
R(m,c,2) =
 \begin{cases}
	\infty 													&	\text{for \(m\) even, \(c\) odd} \\
	\big\lceil \frac{m}{2} \big\lceil \frac{m+c}{2} \big\rceil + \frac{c}{2} \big\rceil		&	\text{otherwise.}
\end{cases}    \end{array} $$
\end{theorem}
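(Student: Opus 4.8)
The plan is to treat the three regimes of the formula separately: the infinite case, a lower bound given by an explicit coloring of an initial segment, and a matching upper bound. Throughout I abbreviate $k=\lceil\frac{m+c}{2}\rceil$, so that the claimed finite value is $R=\lceil\frac{mk+c}{2}\rceil$. I would note at the outset that a monochromatic solution forces $x_1+\dots+x_m+c=2x_0$ to hold among integers, so the substantive content is for $c$ a positive integer, and the ``otherwise'' hypothesis is exactly ``$m$ odd, or both $m$ and $c$ even'' --- precisely the cases in which the parity obstruction below fails to apply.

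For the two easy regimes I would argue directly. In the infinite case ($m$ even, $c$ odd) the parity coloring works: color odd integers red and even integers blue. In any monochromatic candidate the summands $x_1,\dots,x_m$ share a parity, so $x_1+\dots+x_m$ is even (an even number of equal-parity terms); adding the odd constant $c$ gives an odd number, which can never equal the even $2x_0$. Hence no monochromatic solution exists for any $R$, and $R=\infty$. For the lower bound I would color $\{1,\dots,k-1\}$ red and $\{k,\dots,R-1\}$ blue. A red solution would force $x_0=\tfrac12(x_1+\dots+x_m+c)\ge\tfrac{m+c}{2}$, hence $x_0\ge\lceil\tfrac{m+c}{2}\rceil=k$, exceeding the largest red value $k-1$; a blue solution would force $x_0\ge\tfrac{mk+c}{2}$, hence $x_0\ge\lceil\tfrac{mk+c}{2}\rceil=R$, exceeding the largest blue value $R-1$. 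Since $2\le k\le R-1$ for $c>0$ and $m\ge2$, both classes are nonempty and exhaust $\{1,\dots,R-1\}$, giving $R(m,c,2)\ge R$.

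The upper bound is the crux. Assuming a $2$-coloring of $\{1,\dots,R\}$ with no monochromatic solution, I would normalize $\Delta(1)=$ red and run a forcing argument. The first move uses the cheapest solution: taking $x_1=\dots=x_m=1$ (when $m+c$ is even) or $m-1$ copies of $1$ together with a single $2$ (when $m+c$ is odd, i.e.\ $m$ odd and $c$ even) yields $x_0=k$, forcing $k$ to be blue. A dual move takes $m$ copies of $k$, again adjusting one summand to $k+1$ when parity requires, to produce $x_0=R$, forcing $R$ to be red. The contradiction is then closed by a third solution with target $R$: one copy of $R$ together with $m-1$ summands drawn from the already-forced small red values, summing to $2R-c$, is a monochromatic red solution; the opposite assignment at $R$ is excluded symmetrically by a blue solution with target $R-1$ or $R$.

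The hard part will be the parity bookkeeping that makes every one of these solutions legitimate. Each forcing step needs an $m$-tuple of a prescribed color summing to a prescribed even value, and the only freedom is to replace a summand $v$ by $v+1$; using that freedom honestly requires knowing the colors of a few consecutive integers near $1$, near $k$, and near $R$. I therefore expect the bulk of the work to be a short chain of claims pinning down the colors of specific small integers --- enough ``material'' to realize the closing red and blue solutions --- organized by whether $m+c$ is even or odd and by the parity of $mk+c$, so that the targets $x_0=k$, $x_0=R$, and the final solution all land on integers of the intended color.
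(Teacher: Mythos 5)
Your infinite case and lower bound are exactly the paper's arguments (the parity coloring, and the two-block coloring with $[1,k-1]$ red and $[k,R-1]$ blue), and both are correct. The gap is in the upper bound, which you rightly call the crux but then only sketch --- and the sketch as written cannot be completed along the lines you describe. First, your forcing moves assume more than $\Delta(1)=$ red: producing $x_0=k$ when $m+c$ is odd uses a $2$ among the summands, and producing $x_0=R$ from $m$ copies of $k$ ``adjusted by one'' uses $k+1$; both require knowing the colors of $2$ and $k+1$, which are not forced. The case where $2$ is blue is not bookkeeping --- it needs a genuinely different argument (the paper splits on the color of $3$: if $3$ is red, a contradiction comes via $x=2(m-1)+c$; if $3$ is blue, via $c+2m-2$ and $c+3m-3$), and those arguments only hold for $m$ above a threshold (roughly $m\ge 11$), which forces dozens of explicit small-$(m,c)$ verifications that your plan does not anticipate.

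Second, and more fundamentally, your closing move --- one copy of $R$ together with $m-1$ ``already-forced small red values'' summing to $R-c$ --- is unrealizable from what your chain will have forced. You need $m-1$ red integers summing to exactly $R-c$, which is on the order of $mk/2$, so of average size about $k/2$; but the red values your forcing produces are a handful of small integers near $1$, whose largest achievable sum is on the order of a constant times $m$. The paper's key structural idea, absent from your proposal, is to let $s$ be the length of the maximal initial red block $[1,s]$ and split on whether $s<\frac{R-c}{m-1}$ or $s\ge\frac{R-c}{m-1}$. In the first regime the contradiction is built from solutions involving $s$, $s+1$, and $c+(m-1)s$ (with a further parity split on $ms+c-m+2$); in the second, both $\big\lfloor\frac{R-c}{m-1}\big\rfloor$ and $\big\lceil\frac{R-c}{m-1}\big\rceil$ are red (being at most $s$), and a discrete intermediate-value argument mixing $x$ copies of the floor with $y$ copies of the ceiling hits the target $R-c$ exactly, yielding the red solution ending in $R,R$. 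Without some device of this kind that manufactures red summands of size comparable to $k$, the contradiction cannot be closed, so what you have is a genuine hole rather than deferred parity bookkeeping.
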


\section{Lower Bound} %

We will show the stronger case of \( R = \big\lceil \frac{m}{a} \big\lceil \frac{m+c}{a} \big\rceil + \frac{c}{a} \big\rceil \) being a lower bound of \(R ( m, c, a ) \) for the equation \(\mathcal{E}\).  
To prove the lower bound we must show that we can color the integers in \([1,R-1]\) using two colors, say red and blue, and avoid a monochromatic solution (i.e. not every \(x_i\) can be the same color).
For any \(n \in \mathbb{N}\), let \([1,n]\) denote the set \(\{1, 2, \dots, n\}\).

\begin{lemma}
For \(m \geq 2\), \(c > 0\), and \(a \geq 1\) we have \(R(m,c,a) \geq \big\lceil \frac{m}{a} \big\lceil \frac{m+c}{a} \big\rceil + \frac{c}{a} \big\rceil \).
\end{lemma}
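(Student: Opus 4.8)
The plan is to prove the lower bound by exhibiting an explicit red/blue coloring of the interval $[1,R-1]$, where $R = \left\lceil \frac{m}{a}\left\lceil \frac{m+c}{a}\right\rceil + \frac{c}{a}\right\rceil$, that admits no monochromatic solution to $\mathcal{E}$. The natural coloring to try is a single ``threshold'' split into two blocks. Writing $k_0 = \left\lceil \frac{m+c}{a}\right\rceil$, I would first record that the claimed bound is exactly $R = \left\lceil \frac{m k_0 + c}{a}\right\rceil$, since $\frac{m}{a}k_0 + \frac{c}{a} = \frac{m k_0 + c}{a}$; thus the two natural break points are $k_0$ and $R$. I would then color the small numbers $[1,k_0-1]$ red and the large numbers $[k_0,R-1]$ blue. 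Because a monochromatic solution requires $x_1,\dots,x_m,x_0$ to all lie in the same block, it suffices to rule out solutions confined to each block separately.

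For the red block I would argue by a minimum-sum estimate. Any candidate with $x_1,\dots,x_m,x_0 \in [1,k_0-1]$ satisfies $x_1 + \dots + x_m \geq m$, so $x_1 + \dots + x_m + c = a x_0$ forces $a x_0 \geq m + c$, hence $x_0 \geq \left\lceil \frac{m+c}{a}\right\rceil = k_0$. This contradicts $x_0 \leq k_0 - 1$, so no red monochromatic solution exists. The blue block is handled by the same idea, except that now each variable is at least $k_0$: any solution with all variables in $[k_0,R-1]$ has $x_1 + \dots + x_m \geq m k_0$, forcing $a x_0 \geq m k_0 + c$ and thus $x_0 \geq \left\lceil \frac{m k_0 + c}{a}\right\rceil = R$, again contradicting $x_0 \leq R-1$. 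Note this argument never uses that the $x_i$ are distinct, so repeated values are automatically covered.

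The computations are routine; the step I expect to require the most care is the bookkeeping around the ceilings and the degenerate cases. One must confirm the ceiling identity above explicitly, and then check that both blocks are legitimate: the red block $[1,k_0-1]$ is nonempty exactly when $k_0 \geq 2$, and the blue block is nonempty exactly when $k_0 \leq R-1$. For the ranges of interest one verifies these directly; for instance, when $m \geq 2$, $c > 0$, $a \geq 1$ one has $m + c > 2$, so $k_0 \geq 2$, and $R = \left\lceil \frac{m k_0 + c}{a}\right\rceil \geq \left\lceil k_0 + \frac{c}{a}\right\rceil \geq k_0 + 1$ since $c > 0$. The small/degenerate cases in which a block collapses to the empty set should be addressed separately, but there the coloring simplifies and the corresponding monochromatic-solution argument is vacuous, so the stated bound still holds.
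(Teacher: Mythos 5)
Your proposal is correct and is essentially the paper's own proof: the same two-block threshold coloring (red on $\left[1,\left\lceil \frac{m+c}{a}\right\rceil - 1\right]$, blue on $\left[\left\lceil \frac{m+c}{a}\right\rceil, R-1\right]$) with the same minimum-sum estimate in each block forcing $x_0$ out of that block. The only differences are cosmetic: you pass from $ax_0 \geq mk_0 + c$ to $x_0 \geq R$ via integrality of $x_0$ where the paper writes the equivalent strict inequality $x_0 > R - 1$, and your extra nonemptiness bookkeeping is not actually needed (as you yourself note, an empty block just makes that half of the argument vacuous, so the coloring works in all cases).
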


\begin{proof}  Consider the following coloring: color \( [1, \lceil \frac{m+c}{a}\rceil  - 1 ] \) red and \( \Big[\lceil \frac{m+c}{a}\rceil, \big\lceil \frac{m}{a} \big\lceil \frac{m+c}{a} \big\rceil + \frac{c}{a}  \big\rceil - 1  \Big] \) blue.  We claim that this coloring avoids a monochromatic solution.

Suppose that \(x_1, \dots, x_m\) are red.  Then for \(i=1, \dots, m\), we have \(x_i \geq 1\), and so

\[
\begin{aligned}
x_0 &= \frac{1}{a}(x_1 + x_2 + \dots + x_m + c) \\
	& \geq \frac{1}{a}(1 + 1 + \dots + 1 + c) \\
	& = \frac{1}{a}(m + c) \\
	& > \Big\lceil \frac{m + c}{a} \Big\rceil - 1\,.
\end{aligned}
\]
Therefore \(x_0\) is not red and a red solution is avoided.

Now suppose that \(x_1, \dots, x_m\) are blue.  
Then for \(i=1, \dots, m\), we have \(x_i \geq \big\lceil \frac{m+c}{a} \big\rceil \), so

\[
\begin{aligned}
x_0 &= \frac{1}{a}(x_1 + x_2 + \dots + x_m + c) \\
	& \geq \frac{1}{a}\Big(\Big\lceil \frac{m+c}{a} \Big\rceil + \Big\lceil \frac{m+c}{a} \Big\rceil + \dots + \Big\lceil \frac{m+c}{a} \Big\rceil + c\Big) \\
	& = \frac{1}{a}\Big(m\Big\lceil \frac{m+c}{a} \Big\rceil+ c\Big) \\
	& >  \Big\lceil \frac{m}{a} \Big\lceil \frac{m+c}{a} \Big\rceil + \frac{c}{a} \Big\rceil - 1\,.
\end{aligned}
\]
Therefore \(x_0 \geq  R\) and a monochromatic solution is avoided. 

Thus \( \big\lceil \frac{m}{a} \big\lceil \frac{m+c}{a} \big\rceil + \frac{c}{a} \big\rceil \) is a lower bound for \(R(m,c,a)\).
\end{proof}

\section{Upper Bound}%

We will now show that \( R = \big\lceil \frac{m}{2} \big\lceil \frac{m+c}{2} \big\rceil + \frac{c}{2} \big\rceil \) is also an upper bound for $ R ( m, c, 2 )$ and therefore is the Rado number.  
This will be done by showing that for any 2-coloring of the integers in \([1,R]\), there must be a monochromatic solution to our equation.
In the proof, we will be looking at solutions to the equation $ \mathcal{E}$, so we will adopt the following notation: solutions will be given as $(m+1)$-tuples $ \left( x_1 , x_2 , \dots, x_m , x_0 \right) $. When multiple variables are assigned the same value, we will use underbraces to denote the number of such variables. For example, the $(m+1)$-tuple 
$$ ( \underbrace{ 1, \dots, 1 }_\text{\(m-5\)}, \underbrace{ 2, \dots, 2 }_\text{\(4\)}, m+c+3,m+c+3 ) $$
denotes the solution with $x_1 = x_2 = \cdots = x_{m-5} = 1, x_{m-4} =x_{m-3} = \cdots = x_{m-1} = 2$, and $x_m = x_0 = m + c +3$.

\begin{lemma}
For \(m \geq 2\) and \(c> 0\), we have 
\[
  R(m, c, 2) \leq
    \begin{cases}
	\infty 													&	\text{if \(m\) is even and \(c\) is odd} \\
	\big\lceil \frac{m}{2} \big\lceil \frac{m+c}{2} \big\rceil + \frac{c}{2} \big\rceil		&	\text{otherwise~.}
    \end{cases}
 \]
\end{lemma}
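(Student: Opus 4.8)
The plan is to prove the upper bound by contradiction: assume a $2$-coloring of $[1,R]$ with $R=\big\lceil\frac{m}{2}\big\lceil\frac{m+c}{2}\big\rceil+\frac{c}{2}\big\rceil$ avoids any monochromatic solution to $\mathcal{E}$, and derive a contradiction by forcing a monochromatic tuple. First I would dispose of the parity obstruction: when $m$ is even and $c$ is odd, the left side $x_1+\dots+x_m+c$ is odd, so it can never equal the even number $2x_0$, hence no solution exists at all and $R=\infty$; this matches the lower bound degenerating appropriately. For the main case I would set $K=\big\lceil\frac{m+c}{2}\big\rceil$, so the candidate coloring in the lower-bound lemma splits at $K$, and the idea is to show that \emph{every} coloring (not just that one) is forced into a monochromatic solution once we have the full interval $[1,R]$ available.

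The key structural tool is that $\mathcal{E}$ admits many solutions built by repeating a few small values, as the underbrace notation foreshadows. Concretely, I would identify a family of convenient solutions: tuples of the form $(\underbrace{1,\dots,1}_{m-k},\underbrace{2,\dots,2}_{k},x_0,x_0)$ or more generally tuples where $m-1$ of the $x_i$ take small fixed values and the remaining $x_i$ together with $x_0$ are pinned down by the equation. The strategy is a forcing argument on the color of $1$. Without loss of generality suppose $1$ is red. Then one cheaply-constructed solution forces some specific larger value, say $x_0^{(1)}$, to be blue (else we have a red solution). That blue value, fed back into another solution of $\mathcal{E}$, forces yet another value to be red, and so on; by chaining these implications I expect to trap two values of the same color into a single valid instance of $\mathcal{E}$ living inside $[1,R]$. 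The arithmetic of $\mathcal{E}$ ($2x_0=\sum x_i+c$) makes these chained values land exactly at the boundaries $K$ and $R$, which is why the bound is sharp.

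The main obstacle will be handling the ceiling functions and the parity interplay cleanly. Because $c$ need not be an integer and $m$ can be even or odd, the quantities $\frac{m+c}{2}$ and $\frac{m}{2}K+\frac{c}{2}$ may or may not be integers, and the $(m+1)$-tuples we write down must consist of genuine integers summing correctly; verifying that $2x_0-c-\sum x_i=0$ has an integer solution with all entries in the right color class requires a careful case split on the parities of $m$ and $c$ (in the allowed regime where we are \emph{not} in the $m$ even, $c$ odd case). I would organize this as: (i) the construction of a base solution forcing a second color at $K$, (ii) the construction of a follow-up solution using that forced value to reach near $R$, and (iii) verification that the resulting tuple is a legitimate integer solution of $\mathcal{E}$ contained in $[1,R]$, thereby contradicting the assumed monochromatic-free coloring. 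Matching these constructions exactly against the lower-bound coloring guarantees the value of $x_0$ cannot escape the colored interval, which is the crux that upgrades the lower bound to equality.
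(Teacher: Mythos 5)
Your overall strategy --- assume a monochromatic-solution-free $2$-coloring of $[1,R]$, fix the color of $1$, and chain forced color implications through explicitly constructed solutions of $\mathcal{E}$ until two forced values collide --- is exactly the skeleton of the paper's argument, so the plan is sound in spirit. But two genuine problems keep the proposal from being a proof. First, your disposal of the case $m$ even, $c$ odd is simply wrong: you claim that $x_1+\cdots+x_m+c$ is always odd, ``hence no solution exists at all.'' That parity claim fails because the $x_i$ may have mixed parities; for instance with $m=2$, $c=1$ the tuple $(1,2,2)$ satisfies $1+2+1=2\cdot 2$, so solutions certainly exist. The Rado number is infinite not because $\mathcal{E}$ has no solutions, but because the coloring that makes the evens one color and the odds the other admits no \emph{monochromatic} solution: if all $x_i$ share a parity and $m$ is even, then $x_1+\cdots+x_m$ is even, so the left side is odd and cannot equal $2x_0$. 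This is the paper's Case I argument, and your version needs that correction.

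Second, for the main case your proposal never actually constructs the forcing chains, and the work it defers is precisely where the difficulty lies. The paper must split on the color of $2$; when $1$ and $2$ share a color it introduces $s$ (so that $[1,s]$ is red and $s+1$ is the first blue integer) and builds different solution chains according to whether $s < \frac{R-c}{m-1}$ or not, with further parity subcases; when $2$ is the opposite color it splits again on the color of $3$. More importantly, each forcing step requires verifying that the forced value lies in $[1,R]$, and those verifications reduce to inequalities like $c(m-2)+(m^2-12m+12)\ge 0$ that \emph{fail for small $m$}: the paper must then handle $m\in\{2,4,6,8\}$ (in the even case) and $m\in\{3,5,7,9\}$ (in the odd cases) for finitely many small $c$ by exhibiting explicit solution tables. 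Your proposal's step (iii) (``verification that the resulting tuple is a legitimate integer solution contained in $[1,R]$'') waves at this but gives no tuples, no inequalities, and no recognition that a uniform argument is impossible for small $m$. As it stands, no contradiction is ever actually derived, so the proposal is a plausible outline of the paper's method rather than a proof of the lemma.
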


To show this lemma, we will break our problem into several cases: 
\begin{multicols}{2}
\begin{enumerate}
\item[] Case I: \(m\) even and \(c\) odd 
\item[] Case II: \(m\) even and \(c\) even 
	\begin{enumerate}
	\item[] Case II.A: 1 and 2 Red
	\item[] Case II.B: 1 Red and 2 Blue
		\begin{enumerate}
			\item[] Case II.B.1: 3 Red
			\item[] Case II.B.2: 3 Blue
		\end{enumerate}
	\end{enumerate}
\item[] Case III: \(m\) odd and \(c\) odd 
	\begin{enumerate}
	\item[] Case III.A: 1 and 2 Red
		\begin{enumerate}
		\item[] Case III.A.1: \( s < \frac{R - c}{m-1} \) and \(ms+c-m+2\) even
		\item[] Case III.A.2: \( s < \frac{R - c}{m-1} \) and \(ms+c-m+2\) odd
		\item[] Case III.A.3: \( s \geq \frac{R - c}{m-1} \)
		\end{enumerate}
	\item[] Case III.B: 1 Red and 2 Blue
		\begin{enumerate}
		\item[] Case III.B.1: 3 Red
		\item[]  Case III.B.2: 3 Blue
		\end{enumerate}
	\end{enumerate}
\item[] Case IV: \(m\) odd and \(c\) even
	\begin{enumerate}
	\item[] Case IV.A: 1 and 2 Red
		\begin{enumerate}
		\item[] Case IV.A.1: \( s <  \frac{R - c}{m-1} \)
		\item[]  Case IV.A.2: \( s \geq  \frac{R - c}{m-1} \)
		\end{enumerate}
	\item[] Case IV.B: 1 Red and 2 Blue
		\begin{enumerate}
		\item[] Case IV.B.1: 3 Red
		\item[] Case IV.B.2: 3 Blue
		\end{enumerate}
	\end{enumerate}
\end{enumerate}
\end{multicols}

\subsubsection*{\large{Case I: $m$ even, $c$ odd}}%

 When \(m\) is even and \(c\) is odd, we claim the Rado number is infinite.

\begin{proof}  Consider the following coloring: color the even integers red and the odd integers blue.
Let \(x_1, \dots, x_m\) be red and therefore even.  
Then \(x_1 + x_2 + \dots + x_m\) is even, and so \(x_1 + x_2 + \dots + x_m + c\) is odd.
Then \(x_1 + x_2 + \dots + x_m + c = 2x_0\) has no integer solution for \(x_0\).
Similarly let \(x_1, \dots, x_m\) be blue and therefore odd.  
Then  \(x_1 + x_2 + \dots + x_m\) is even, and so \(x_1 + x_2 + \dots + x_m + c\) is odd.
Again there is no integer solution for \(x_0\).
Therefore we can color all of the natural numbers and avoid a monochromatic solution.
(This is a similar argument to that of Schaal and Vestal's on the discrete case for \(x_1 + x_2 + \dots + x_{m-1} = 2x_m\).  \cite{SchVes})
\end{proof}

\newpage
\subsubsection*{\large{Case II: $m$ even, $c$ even}}%

Note that when both \(m\) and \(c\) are even, we get \( R = \frac{m^2 + mc}{4} + \frac{c}{2} \).  

\subsubsection*{Case II.A: 1 and 2 Red}

\begin{proof}  Suppose that \( 1, 2, \dots, s \) are colored red and \( s+1\) is colored blue; thus \(s+1\) is the first blue number.  
If such an \(s\) does not exist, then all of the integers in \([1,R]\) are colored red and we would have a red solution, namely \( (1, \dots, 1, \frac{m+c}{2} ) \).
Thus we see that \(\frac{m+c}{2}\) is blue and thus \( s+1 \leq \frac{m+c}{2} \). This implies that \( s \leq \frac{m+c}{2} -1 \), which we will use below.
Now consider \( x = \frac{s(m-2)}{2}+m+c-1 \).  

Note that  \( (1, \underbrace{ 2, \dots, 2 }_\text{ \( \frac{m-2}{2} \) }, \underbrace{ s, \dots, s }_\text{ \( \frac{m-2}{2} \) }, x, x ) \) is a solution which implies that \(x\) is blue and \( (s+1, \dots, s+1, \frac{m+c}{2}, \frac{m+c}{2}, x ) \) is a solution implying that \(x\) is red.  
This contradiction is enough to show that \( \frac{m^2 + mc}{4} + \frac{c}{2} \) is our Rado number, provided that \(x \leq R\).  This is true since

\[
\begin{aligned}
	x &= \Big(\frac{m-2}{2}\Big)s+m+c-1 \\
	   &\leq \bigg(\frac{m-2}{2}\bigg) \bigg( \frac{m+c}{2}-1 \bigg)+m+c-1 \\
	   &= \frac{m^2 + mc}{4} + \frac{c}{2}~.
\end{aligned}
\]
Note that when \(s = \frac{m+c}{2} -1\) we get \(x = R\).
\end{proof}

\subsubsection*{Case II.B: 1 Red and 2 Blue} 

To cover this case we look at the two possible colorings for 3, red and blue, which gives us two sub cases.

\subparagraph{Case II.B.1: 3 Red}
\begin{proof} 

Consider \( x = \frac{2m+c}{2} \).  Note that \( x \leq \frac{m^2 + mc}{4} + \frac{c}{2} = R \).

The solution \( ( \underbrace{ 1, \dots, 1 }_\text{ \( \frac{m}{2} \) }, \underbrace{ 3, \dots, 3 }_\text{ \( \frac{m}{2} \) }, x ) \) implies that \(x\) is blue, and the solution \( (2, \dots, 2, x) \) implies that \(x\) is red, thus giving us a contradiction.

\end{proof}
\subparagraph{Case II.B.2: 3 Blue} \label{II.B.2} 
\begin{proof}
The solution \( (2, \dots, 2, c+2m-2, c+2m-2 ) \) implies that \(c+2m-2\) is red, and \(c+3m-3\) is red since \( (3, \dots, 3, c+3m-3, c+3m-3 )\) is a solution.  
Then \( (1, \dots, 1, c+3m-3, c+2m-2 ) \) creates a red solution.

We must also show that \(c+2m-2\) and \(c+3m-3\) are less than \( R  \).  
Since \(c+2m-2 \leq c+3m-3\) for \(m \geq 1\), we only need to verify the inequality involving \(c+3m-3\).

\newpage

Note that:
\begin{table}[h!]
\begin{center}
\begin{tabular}{l r c l r} 
		& \( 3m + c - 3 \)  	& \( \leq \) 	& \( \displaystyle\frac{m^2 + mc}{4} + \frac{c}{2} \) 	& \\
\( \iff \)   	& 0				& \( \leq \)	& \( m^2 + mc - 12m - 2c + 12 \) 					& \\
\( \iff	\)	& 0				& \( \leq \)	& \( c(m - 2) + (m^2 - 12m + 12) ~. 	\)				& (1) 
\end{tabular}
\end{center}
\end{table}

Since \(c(m-2) \geq 0\) it suffices to have \((m^2 - 12m + 12) \geq 0\), but this is equivalent to:
\begin{align*}
	24 	  &\leq (m-6)^2 \\
	11	  &\leq m~.
\end{align*}

Thus \( m < 11 \) is not covered by the above proof and we must prove each case separately.  Using (1) we can determine for which \(c\) values our above argument holds and which we have to prove.  We use this to show there is no way to avoid a monochromatic solution and therefore \(R\) is the Rado number.  

For each case assume 1 is red, and 2 and 3 are blue.

\subparagraph{\underline{$m=10$:}} \indent \( 0 \leq 8c - 8 \) implies that \( c \geq 1 \), which is always true; thus no separate case is needed.

\subparagraph{\underline{$m=8$:}} \indent \( 0 \leq 6c - 20 \) implies that \( c \geq 4 \).  Thus we need only consider \(c=2\).

\begin{table}[!h]
\begin{tabular}{lll}
\hspace{2.7cm} &\(R(8,2,2)=21\) & \( (2, 2, 2, 2, 2, 2, 2, 2, 9) \Rightarrow \) 9 is red                \\
&            & \( (3, 3, 3, 3, 3, 3, 3, 3, 13) \Rightarrow \) 13 is red              \\
&            & \( (1, 1, 1, 1, 1, 1, 9, 9, 13) \) creates a red solution.
\end{tabular}
\end{table}

\subparagraph{\underline{$m=6$:}} \indent \( 0 \leq 4c - 24 \) implies that \( c \geq 6 \).  Thus we consider \(c=2\) and \(c=4\).

\begin{table}[!h]
\begin{tabular}{lll}
\hspace{2.7cm} & \(R(6,2,2)=13\) & \( (2, 2, 2, 2, 2, 2, 7) \Rightarrow \) 7 is red           \\
                 &                 & \( (3, 3, 3, 3, 3, 3, 10) \Rightarrow \) 10 is red         \\
                 &                 & \( (1, 1, 1, 1, 7, 7, 10) \) creates a red solution.  \\
                 & \(R(6,4,2)=17\) & \( (1, 1, 1, 1, 1, 1, 5) \Rightarrow \) 5 is blue          \\
                 &                 & \( (2, 2, 2, 2, 2, 2, 8) \Rightarrow \) 8 is red           \\
                 &                 & \( (3, 3, 3, 3, 3, 3, 11) \Rightarrow \) 11 is red         \\
                 &                 & \( (5, 5, 5, 5, 5, 5, 17) \Rightarrow \) 17 is red         \\
                 &                 & \( (1, 1, 1, 8, 8, 11, 17) \) creates a red solution.
\end{tabular}
\end{table}

\newpage
\subparagraph{\underline{$m=4$:}} \indent \( 0 \leq 2c - 20 \) implies that \( c \geq 10 \).  Thus we consider \(c=2, 4, 6, \text{and } 8\).

\begin{table}[h!]
\begin{tabular}{lll}
\hspace{2.7cm} & \(R(4,2,2)=7\)  & \( (2, 2, 2, 2, 5) \Rightarrow \) 5 is red      \\
                 &                 & \( (3, 3, 3, 3, 7) \Rightarrow \) 7 is red      \\
                 &                 & \( (1, 1, 5, 5, 7) \) creates a red solution.   \\    
	         & \(R(4,4,2)=10\) & \( (1, 1, 1, 1, 4) \Rightarrow \) 4 is blue     \\
                 &                 & \( (2, 2, 2, 2, 6) \Rightarrow \) 6 is red      \\
                 &                 & \( (3, 3, 3, 3, 8) \Rightarrow \) 8 is red      \\
                 &                 & \( (4, 4, 4, 4, 10) \Rightarrow \) 10 is red      \\
                 &                 & \( (1, 1, 6, 8, 10) \) creates a red solution.       \\   
               & \(R(4,6,2)=13\) & \( (1, 1, 1, 1, 5) \Rightarrow \) 5 is blue     \\
                 &                 & \( (2, 2, 2, 2, 7) \Rightarrow \) 7 is red      \\
                 &                 & \( (3, 3, 3, 3, 9) \Rightarrow \) 9 is red      \\
                 &                 & \( (5, 5, 5, 5, 13) \Rightarrow \) 13 is red    \\
                 &                 & \( (1, 1, 9, 9, 13) \) creates a red solution.  \\
                 & \(R(4,8,2)=16\) & \( (1, 1, 1, 1, 6) \Rightarrow \) 6 is blue     \\
                 &                 & \( (2, 2, 2, 2, 8) \Rightarrow \) 8 is red      \\
                 &                 & \( (1, 1, 9, 9, 13) \Rightarrow \) 13 is blue   \\
                 &                 & \( (3, 3, 6, 6, 13) \) creates a blue solution.
\end{tabular}
\end{table}

\subparagraph{\underline{$m=2$:}} \indent (1) fails in this case; therefore we need to consider the equation \(x_1 + x_2 + c = 2x_0\) for arbitrary even integers \(c\).

\begin{table}[h]
\begin{tabular}{lll}
\hspace{2.7cm} & \(R(2,c,2)=c+1\) & \((1, c+1, c+1) \Rightarrow c+1\) is blue                                \\
                 &                  & \( (1, 1, \frac{c+2}{2}) \Rightarrow \frac{c+2}{2}\) is blue       \\
                 &                  & \( (\frac{c+2}{2}, \frac{c+2}{2}, c+1) \) creates a blue solution.
\end{tabular}
\end{table}
\end{proof}

\subsubsection*{\large{Case III: $m$ odd, $c$ odd}}%

%
\subsubsection*{Case III.A: 1 and 2 Red} 

We will consider three sub cases based on the values of \( \frac{R - c}{m-1} \)  and \(ms+c-m+2\). 

\subparagraph{Case III.A.1: \( s < \frac{R - c}{m-1} \) and  \( ms+c-m+2\) even } \label{III.A.1}
\begin{proof} Suppose that 1 and 2 are red.  Let  \( 1, 2, \dots, s \) be colored red and \(s+1\) be colored blue.  
The solution \( ( s-1, \dots, s-1, s, s, \frac{ms-m+c+2}{2} ) \) implies that \( \frac{ms-m+c+2}{2} \) is blue.

The solution \( (s+1, \dots, s+1, \frac{ms+c-m+2}{2}, \frac{ms+c-m+2}{2}, c+(m-1)s ) \) implies that \(c+(m-1)s\) is red.  Note that \(c+(m-1)s < R\) since this is equivalent to \(s < \frac{R - c}{m-1} \).
Then \( (s, \dots, s, c+(m-1)s, c+(m-1)s ) \) creates a red solution and a contradiction.
\end{proof}
\subparagraph{Case III.A.2: \( s < \frac{R - c}{m-1} \) and  \( ms+c-m+2\) odd }
\begin{proof}
The solutions \( ( s-1, \dots, s-1, s, \frac{ms+c-m+1}{2} ) \) and \( ( s-1, \dots, s-1, s, s, s, \frac{ms+c-m+3}{2} ) \) imply that both \( \frac{ms+c-m+1}{2} \) and \(  \frac{ms+c-m+3}{2} \) are blue.  The solution
$$ ( s+1, \dots, s+1,  \frac{ms+c-m+1}{2}, \frac{ms+c-m+3}{2}, c+(m-1)s ) $$
implies that \( c+(m-1)s \) is red.
Then  \( (s, \dots, s, c+(m-1)s, c+(m-1)s ) \) creates a red solution and a contradiction.
\end{proof}
\subparagraph{Case III.A.3: \( s \geq  \frac{R - c}{m-1} \)} \label{III.A.2}
\begin{proof}
The solutions \( (1, \dots, 1, 1, \frac{m+c}{2}) \) and \( (1, \dots, 1, 2, 2, \frac{m+c}{2}+1 ) \) imply that \( \frac{m+c}{2} \) and \( \frac{m+c}{2}+1 \) are blue. 
With \(m\) odd and \(c\) odd we have two cases: \( m+c \equiv 0 \pmod{4} \) or \( m+c \equiv 2 \pmod{4} \). 

When \( m+c \equiv 0 \pmod{4} \) we have \(R = \big\lceil \frac{m}{2} \big\lceil \frac{m+c}{2} \big\rceil + \frac{c}{2} \big\rceil =  \big\lceil \frac{m}{2} \big( \frac{m+c}{2} \big) + \frac{c}{2} \big\rceil = \) \(
\frac{m^2 + cm}{4} + \frac{c+1}{2}\) and the solution \( ( \frac{m+c}{2}, \dots, \frac{m+c}{2}, \frac{m+c}{2}+1, R) \) implies that \( R\) is red.
Similarly, when \( m+c \equiv 2 \pmod{4} \) we have \(R = \frac{m^2 + cm}{4} + \frac{c}{2}\) and the solution \( ( \frac{m+c}{2}, \dots, \frac{m+c}{2}, R) \) implies that \(R\) is red.

Since \( s \geq \frac{R-c}{m-1} \) we know that \( s \geq \lceil\frac{R-c}{m-1}\rceil \) and \( s > \lfloor\frac{R-c}{m-1}\rfloor \), thus \( \lceil\frac{R-c}{m-1}\rceil \) and \(  \lfloor\frac{R-c}{m-1}\rfloor \) are both red.
To show that a red solution exists, consider \[ \Big( \underbrace{ \Big\lfloor\frac{R-c}{m-1}\Big\rfloor, \dots, \Big\lfloor\frac{R-c}{m-1}\Big\rfloor}_\text{\(x\)}, \underbrace{ \Big\lceil\frac{R-c}{m-1}\Big\rceil, \dots,\Big\lceil\frac{R-c}{m-1}\Big\rceil }_\text{\(y\)}, R, R \Big). \]
This will be a red solution if \( \lfloor\frac{R-c}{m-1}\rfloor x + \lceil\frac{R-c}{m-1}\rceil y = R-c \) and \( x+y = m-1\).

Note that if \(x=m-1\) and \(y=0\) then \(  \lfloor\frac{R-c}{m-1}\rfloor(m-1) \leq \frac{R-c}{m-1}(m-1) = R-c \).
Similarly, if \(x=0\) and \(y=m-1\) then \( \lceil\frac{R-c}{m-1}\rceil(m-1) \geq \frac{R-c}{m-1}(m-1) = R-c\ \).

Thus we can take \(x_1 = x_2 = \dots = x_{m-1} = \lfloor\frac{R-c}{m-1}\rfloor \) and replace a \( \lfloor\frac{R-c}{m-1}\rfloor \) with \( \lceil\frac{R-c}{m-1}\rceil \) to increase our value by 1.  The discrete intermediate value theorem guarantees that we can continue doing this and eventually reach the value \(R-c\).
In particular we find these values to be: 
\begin{align*}
x &= c+\Big\lceil\frac{R-c}{m-1}\Big\rceil(m-1) - R \\
y &=  m - 1 - c - \Big\lceil\frac{R-c}{m-1}\Big\rceil(m-1) + R ~.
\end{align*}
\end{proof}

\subsubsection*{Case III.B: 1 Red and 2 Blue}

Similar to Case II.B, on page \pageref{II.B.2}, we will look at two possible colorings for 3, which gives us two sub cases.

\subparagraph{Case III.B.1: 3 Red} \label{III.2.a}

\begin{proof}  Consider \( x = 2(m-1) + c \). 
The solution \( (\underbrace{ 1, \dots, 1 }_\text{\(\frac{m-1}{2}\)}, \underbrace{ 3, \dots, 3 }_\text{\(\frac{m-1}{2}\)}, x, x ) \) implies that \(x\) is blue, and \( (2, \dots, 2, x, x ) \) implies that \(x\) is red; thus creating a contradiction.
We must also show that \( x \leq R \).
From Case III.A.3, we know that the smallest \(R\) can be is \( \frac{m^2 + cm}{4}  + \frac{c}{2}\).
So it suffices to show \( 2(m-1)+c \leq \frac{m^2 +cm}{4} + \frac{c}{2} \).

\bigskip
Note that:
\begin{table}[h!]
\begin{center}
\begin{tabular}{l r c l r} 
		& \( 2(m-1)+c \) 	& \( \leq \) 	& \( \displaystyle\frac{m^2+cm}{4} + \frac{c}{2} \)	& \\
\( \iff	\)	& 0	   			& \( \leq \)	& \( m^2 + mc - 8m - 2c + 8 \)					& \\
\( \iff	\)	& 0	   			& \( \leq \)	& \( c(m - 2) + (m^2 - 8m + 8) ~. \)				& (2)
\end{tabular}
\end{center}
\end{table}

Since \(c(m-2) \geq 1\) it suffices to have \((m^2 - 8m + 8) \geq -1\), but this is equivalent to:
\begin{align*}
	7	  &\leq (m-4)^2 \\
	7	  &\leq m~.
\end{align*}

Thus \( m < 7 \) is not covered  above and we must prove each case separately.  Using (2) we can determine for which \(c\) values our above argument holds and which we have to prove.  

For each case assume 1 and 3 are red, and 2 is blue.

\subparagraph{\underline{$m=5$:}} \indent \( 0 \leq 3c - 7 \) implies that \( c \geq 2 \).  Thus we only consider \(c=1\).

\begin{table}[!h]
\begin{tabular}{lll}
\hspace{2.7cm} &\(R(5,1,2)=8\) & \( (1, 1, 1, 1, 1, 3) \) creates a red solution.
\end{tabular}
\end{table}

\subparagraph{\underline{$m=3$:}} \indent \( 0 \leq c - 7 \) implies that \( c \geq 7 \).  Thus we consider \(c=1, 3, \text{and } 5\).

\begin{table}[!h]
\begin{tabular}{lll}
\hspace{2.7cm} & \(R(3,1,2)=4\)  & \( (1, 1, 3, 3) \) creates a red solution.    \\
                 & \(R(3,3,2)=6\) & \( (1, 1, 1, 3) \) creates a red solution.    \\
                 & \(R(3,5,2)=9\)& \( (1, 1, 3, 5) \Rightarrow \) 5 is blue     \\
                 &		    & \( (3, 3, 3, 7 ) \Rightarrow \) 7 is blue \\
                 &                 & \( (2, 2, 5, 7) \) creates a blue solution.
\end{tabular}
\end{table}

\end{proof}
\subparagraph{Case III.B.2: 3 Blue }\label{III.2.b}
\begin{proof}
The solution \( (2, \dots, 2, c+2m-2, c+2m-2 ) \) implies that \(c+2m-2\) is red, and \(c+3m-3\) is red since \( (3, \dots, 3, c+3m-3, c+3m-3 )\) is a solution.  
Then \( (1, \dots, 1, c+3m-3, c+2m-2 ) \) creates a red solution and a contradiction.

We must also show that \(c+2m-2\) and \(c+3m-3\) are less than \( R = \frac{m^2 + mc}{4} + \frac{c}{2} \).  
Since \(c+2m-2 \leq c+3m-3\) for \(m \geq 1\), we only need to verify the inequality involving \(c+3m-3\).

\newpage
Note that:
\begin{table}[h!]
\begin{center}
\begin{tabular}{l r c l r} 
		& \( 3m + c - 3 \)	& \( \leq \)	& \( \displaystyle\frac{m^2 + mc}{4} + \frac{c}{2} \)	& \\
\( \iff	\) 	& 0	   			& \( \leq \)	& \( m^2 + mc - 12m - 2c + 12 \)		& \\
\( \iff	\) 	& 0	   			& \( \leq \)	& \( c(m - 2) + (m^2 - 12m + 12) ~. \)	& (3)
\end{tabular}
\end{center}
\end{table}

Since \(c(m-2) \geq 1\) it suffices to have \((m^2 - 12m + 12) \geq -1\), but this is equivalent to:
\begin{align*}
	23 	  &\leq (m-6)^2 \\
	11	  &\leq m~.
\end{align*}

Thus \( m < 11 \) is not covered by the above argument and so we must prove each case separately.  Using (3) we can determine for which \(c\) values our above argument holds and which we have to prove. 

For each case assume 1 is red, and 2 and 3 are blue.

\subparagraph{\underline{$m=9$:}} \indent \( 0 \leq 7c - 15 \) implies that \( c \geq 2 \).  Thus we need only consider \(c=1\)

\begin{table}[!h]
\begin{tabular}{lll}
\hspace{2.7cm} &\(R(9,1,2)=23\) & \( (2, 2, 2, 2, 2, 2, 2, 2, 3,10) \Rightarrow \) 10 is red                \\
&            & \( (3, 3, 3, 3, 3, 3, 3, 3, 3, 14) \Rightarrow \) 14 is red             \\
&            & \( (1, 1, 1, 1, 1, 1, 1, 10, 10, 14) \) creates a red solution.
\end{tabular}
\end{table}

\subparagraph{\underline{$m=7$:}} \indent \( 0 \leq 5c - 23 \) implies that \( c \geq 5 \).  Thus we need only consider \(c=1\) and \(c=3\).

\begin{table}[!h]
\begin{tabular}{lll}
\hspace{2.7cm} &\(R(7,1,2)=15\) & \( (2, 2, 2, 2, 2, 2, 3, 8) \Rightarrow \) 8 is red                \\
&            & \( (3, 3, 3, 3, 3, 3, 3, 11) \Rightarrow \) 11 is red              \\
&            & \( (1, 1, 1, 1, 1, 8, 8, 11) \) creates a red solution.		  \\
&	       \(R(7,3,2)=19\) & \( (2, 2, 2, 2, 3, 3, 3, 10 \Rightarrow \) 10 is red			\\
&            & \( (2, 2, 3, 3, 3, 3, 3, 11) \Rightarrow \) 11 is red              \\
&            & \( (1, 1, 1, 1, 1, 1, 11, 10) \) creates a red solution.
\end{tabular}
\end{table}

\subparagraph{\underline{$m=5$:}} \indent \( 0 \leq 3c - 23 \) implies that \( c \geq 8 \).  Thus we consider \(c=1, 3, 5, \) \( \text{and } 7\).

\begin{table}[!h]
\begin{tabular}{lll}
\hspace{2.7cm} & \(R(5,1,2)=8\) & \( (2, 2, 2, 2, 3, 6) \Rightarrow \) 6 is red           \\
                 &                 & \( (3, 3, 3, 3, 3, 8) \Rightarrow \) 8 is red         \\
                 &                 & \( (1, 1, 1, 6, 6, 8) \) creates a red solution.  \\
                 & \(R(5,3,2)=12\) & \( (2, 2, 2, 2, 3, 7) \Rightarrow \) 7 is red          \\
                 &                 & \( (1, 1, 1, 1, 7, 7) \) creates a red solution.  
\end{tabular}
\end{table}
\begin{table}[!h]
\begin{tabular}{lll}
\hspace{2.7cm}		& \(R(5,5,2)=15\) & \( (2, 2, 3, 3, 3, 9) \Rightarrow \) 9 is red          \\
                 &                 & \( (1, 1, 1, 1, 9, 9) \) creates a red solution.   \\
                 & \(R(5,7,2)=19\) & \( (3, 3, 3, 3, 3, 11) \Rightarrow \)11 is red          \\
                 &                 & \( (1, 1, 1, 1, 11, 11) \) creates a red solution.
\end{tabular}
\end{table}

\newpage
\subparagraph{\underline{$m=3$:}} \indent \( 0 \leq c - 15 \) implies that \( c \geq 15 \).  Thus we consider \(c=1, 3, 5, 7, \) \( 9, 11,\text{ and } 13\).

\begin{table}[!h]
\begin{tabular}{lll}
\hspace{2.7cm} & \(R(3,1,2)=5\)  & \( (2, 2, 3, 4) \Rightarrow \) 4 is red      \\
                 &                 & \( (1, 4, 4, 5) \Rightarrow \) 5 is blue      \\
                 &                 & \( (2, 2, 5, 5) \) creates a blue solution.   \\
                 & \(R(3,3,2)=6\) & \( (2, 2, 3, 5) \Rightarrow \) 5 is red     \\
                 &                 & \( (1, 1, 5, 5) \) creates a red solution.       \\
                 & \(R(3,5,2)=9\)& \( (3, 3, 3, 7) \Rightarrow \) 7 is blue     \\
                 &                 & \( (1, 1, 7, 7) \) creates a red solution.  \\
            & \(R(3,7,2)=11\) & \( (1, 1, 1, 5) \Rightarrow \) 5 is blue     \\
                 &                 & \( (2, 2, 3, 7) \Rightarrow \) 7 is red      \\
                 &                 & \( (5, 5, 5, 11),\Rightarrow \) 11 is red   \\
                 &                 & \( (1, 7, 7, 11) \) creates a red solution.   \\
                 & \(R(3,9,2)=14\) & \( (1, 1, 1, 6) \Rightarrow \) 6 is blue     \\
                 &                 & \( (3, 3, 3, 9) \Rightarrow \) 9 is red      \\
                 &                 & \( (1, 1, 9, 10) \Rightarrow \) 10 is blue    \\
                 &                 & \( (2, 3, 6, 10) \) creates a blue solution.  \\
                  & \(R(3,11,2)=16\) & \( (1, 1, 1, 7) \Rightarrow \) 7 is blue     \\
                 &                 & \( (2, 2, 3, 9) \Rightarrow \) 9 is red      \\
                 &                 & \( (1, 1, 9, 11) \Rightarrow \) 11 is blue    \\
                 &                 & \( (2, 2, 7, 11) \) creates a blue solution.  \\
                  & \(R(3,13,2)=19\) & \( (1, 1, 1, 8) \Rightarrow \) 8 is blue     \\
                 &                 & \( (3, 3, 3, 11) \Rightarrow \)11 is red      \\
                 &                 & \( (1, 1, 11, 13) \Rightarrow \) 13 is blue    \\
                 &                 & \( (2, 3, 8, 13) \) creates a blue solution.  \\
\end{tabular}
\end{table}

\end{proof}

\subsubsection*{\large{Case IV: $m$ odd, $c$ even}}%

%
\subsubsection*{Case IV.A: 1 and 2 Red}

Similar to case III.A we will consider the sub cases based on the value of \( \frac{R - c}{m-1} \), except that we can combine two of the previous three cases for this proof.

\subparagraph{Case IV.A.1: \( s < \frac{R - c}{m-1} \)}

\begin{proof} This proof is identical to Case III.A.1 and Case III.A.2 on page \pageref{III.A.1}. 

\subparagraph{Case IV.A.2: \( s \geq  \frac{R - c}{m-1} \)} 

This proof is similar to Case III.A.3, on page \pageref{III.A.2}.

The solutions \( (1, \dots, 1, 1, \frac{m+c+1}{2}) \) and \( (1, \dots, 1, 2, 2, \frac{m+c+3}{2} ) \) imply that \( \frac{m+c+1}{2} \) and \( \frac{m+c+3}{2} \) are blue. 
With \(m\) odd and \(c\) odd we have two cases: \( m+c \equiv 1 \pmod{4} \) or \( m+c \equiv 3 \pmod{4} \). 

When \( m+c \equiv 1 \pmod{4} \) then \(R = \frac{m^2 + cm+m}{4} + \frac{c+1}{2}\) and the solution \\
 \( ( \frac{m+c+1}{2}, \dots, \frac{m+c+1}{2}, \frac{m+c+3}{2}, R) \) implies that \( R\) is red.
Similarly, when \( m+c \equiv 3 \pmod{4} \) then \(R = \frac{m^2 + cm}{4} + \frac{c}{2}\) and the solution \( ( \frac{m+c+1}{2}, \dots, \frac{m+c+1}{2}, R) \) implies that \(R\) is red.

Since \( s \geq \frac{R-c}{m-1} \) we know that \( s \geq \lceil\frac{R-c}{m-1}\rceil \) and \( s > \lfloor\frac{R-c}{m-1}\rfloor \), thus \( \lceil\frac{R-c}{m-1}\rceil \) and \(  \lfloor\frac{R-c}{m-1}\rfloor \) are both red.
Then using the same argument as before 
\[ 
\Big( \underbrace{ \Big\lfloor\frac{R-c}{m-1}\Big\rfloor, \dots, \Big\lfloor\frac{R-c}{m-1}\Big\rfloor}_\text{\( c+\lceil\frac{R-c}{m-1}\rceil(m-1) - R\)}, \underbrace{ \Big\lceil\frac{R-c}{m-1}\Big\rceil, \dots,\Big\lceil\frac{R-c}{m-1}\Big\rceil }_\text{\( m - 1 - c - \lceil\frac{R-c}{m-1}\rceil(m-1) + R \)}, R, R \Big) 
\]
 creates a red solution and a contradiction.
\end{proof}

\subsubsection*{Case IV.B: 1 Red and 2 Blue}

Similar to Case III.B, we will look at the two possible colorings for 3, which gives us two sub cases.

\subparagraph{Case IV.B.1: 3 Red}

\begin{proof} This proof is identical to Case III.B.1 on page \pageref{III.2.a}.

\subparagraph{Case IV.B.2: 3 Blue}

The solution \( (2, \dots, 2, c+2m-2, c+2m-2 ) \) implies that \(c+2m-2\) is red, and \(c+3m-3\) is red since \( (3, \dots, 3, c+3m-3, c+3m-3 )\) is a solution.  
Then \( (1, \dots, 1, c+3m-3, c+2m-2 ) \) creates a red solution and a contradiction.

We must also show that \(c+2m-2\) and \(c+3m-3\) are less than \( R = \frac{m^2 + mc}{4} + \frac{c}{2} \).  
Since \(c+2m-2 \leq c+3m-3\) for \(m \geq 1\), we only need to verify the inequality involving \(c+3m-3\).

\bigskip
Note that:
\begin{table}[h!]
\begin{center}
\begin{tabular}{r  r c l r}
		& \( 3m + c - 3 \)	& \( \leq \)	& \( \displaystyle\frac{m^2 + mc}{4} + \frac{c}{2} \)	& \\
\( \iff \)	&0	   			& \( \leq \)	& \( m^2 + mc - 12m - 2c + 12 \)					& \\
\( \iff	\) 	&0	   			& \( \leq \)	& \( c(m - 2) + (m^2 - 12m + 12) ~. \)				&\ (4)
\end{tabular}
\end{center}
\end{table}

Since \(c(m-2) \geq 2\) it suffices to have \((m^2 - 12m + 12) \geq -2\), but this is equivalent to:
\begin{align*}
	22 	  &\leq (m-6)^2 \\
	11	  &\leq m~.
\end{align*}

Thus \( m < 11 \) is not covered by above we must prove each case separately.  Using (4) we can determine for which \(c\) values our above argument holds and which we have to prove.  

For each case assume 1 is red, and 2 and 3 are blue.

\newpage

\subparagraph{\underline{$m=9$:}} \indent \( 0 \leq 7c - 15 \) implies that \( c \geq 2 \).  Thus we need only consider \(c=2\)

\begin{table}[!h]
\begin{tabular}{lll}
\hspace{2.7cm} &\(R(9,2,2)=28\) & \( (2, 2, 2, 2, 2, 3, 3, 3, 3,12) \Rightarrow \) 12 is red                \\
&            & \( (2, 3, 3, 3, 3, 3, 3, 3, 3, 14) \Rightarrow \) 14 is red             \\
&            & \( (1, 1, 1, 1, 1, 1, 1, 1, 14, 12) \) creates a red solution.
\end{tabular}
\end{table}

\subparagraph{\underline{$m=7$:}} \indent \( 0 \leq 5c - 23 \) implies that \( c \geq 5 \).  Thus we need only consider \(c=2\) and \(c=4\).

\begin{table}[!h]
\begin{tabular}{lll}
\hspace{2.7cm} &\(R(7, 2, 2)=19\) & \( (2, 2, 2, 2, 3, 3, 3, 9) \Rightarrow \) 9 is red                \\
&            & \( (2, 2, 2, 3, 3, 3, 3, 10) \Rightarrow \) 10 is red              \\
&            & \( (1, 1, 1, 1, 1, 8, 8, 11) \) creates a red solution.		  \\
&	       \(R(7,4,2)=23\) & \( (2, 2, 2, 2, 3, 3, 3, 10 \Rightarrow \) 10 is red			\\
&            & \( (2, 2, 3, 3, 3, 3, 3, 11) \Rightarrow \) 11 is red              \\
&            & \( (1, 1, 1, 1, 1, 1, 10, 10) \) creates a red solution.
\end{tabular}
\end{table}

\subparagraph{\underline{$m=5$:}} \indent \( 0 \leq 3c - 23 \) implies that \( c \geq 8 \).  Thus we consider \(c=2, 4 \text{ and } 6\).

\begin{table}[!h]
\begin{tabular}{lll}
\hspace{2.7cm} & \(R(5,2,2)=11\) & \( (2, 2, 2, 2, 2, 6) \Rightarrow \) 6 is red           \\
                 &                 & \( (1, 1, 1, 1, 6, 6) \) creates a red solution.  \\
                 & \(R(5,4,2)=15\) & \( (2, 2, 2, 3, 3, 8) \Rightarrow \) 8 is red          \\
                 &                 & \( (1, 1, 1, 1, 8, 8) \) creates a red solution.  \\
	         & \(R(5,6,2)=18\) & \( (2, 2, 2, 2, 2, 8) \Rightarrow \) 8 is red          \\
	         &                 & \( (2, 2, 2, 3, 3, 9) \Rightarrow \) 9 is red   \\
                 &                 & \( (1, 1, 1, 1, 8, 9) \) creates a red solution.  
\end{tabular}
\end{table}

\subparagraph{\underline{$m=3$:}} \indent \( 0 \leq c - 15 \) implies that \( c \geq 15 \).  Thus we consider \(c=2, 4, 6, 8, \) \( 10, 12, \text{and } 14\).

\begin{table}[!h]
\begin{tabular}{lll}
\hspace{2.7cm} & \(R(3,2,2)=6\)  & \( (2, 2, 2, 4) \Rightarrow \) 4 is red      \\
                 &                 & \( (1, 1, 4, 4) \Rightarrow \) creates a blue solution.        \\
                 & \(R(3,4,2)=8\) & \( (2, 3, 3, 6) \Rightarrow \) 6 is red     \\
                 &                 & \( (1, 1, 6, 6) \) creates a blue solution.       \\
                 & \(R(3,6,2)=11\)& \( (2, 2, 2, 6) \Rightarrow \) 6 is red     \\
                 &                 & \( (2, 3, 3, 7) \Rightarrow \) 7 is red      \\
                 &                 & \( (1, 1, 6, 7) \) creates a red solution.         \\
   & \(R(3,8,2)=13\) & \( (2, 3, 3, 8) \Rightarrow \) 8 is blue     \\
                 &                 & \( (1, 1, 8, 9) \Rightarrow \) 9 is blue      \\
                 &                 & \( (2, 3, 9, 11),\Rightarrow \) 11 is red   \\
                 &                 & \( (2, 9, 9, 14),\Rightarrow \) 14 is red   \\
                 &                 & \( (1, 8, 11, 14) \) creates a red solution. 
                 \end{tabular}
\end{table}
\begin{table}[!h]
\begin{tabular}{lll} 
  \hspace{2.7cm}               & \(R(3,10,2)=16\) & \( (2, 2, 2, 8) \Rightarrow \) 8 is red     \\
                 &                 & \( (1, 1, 8, 10) \Rightarrow \)10 is blue    \\
                 &                 & \( (2, 2, 10, 12) \Rightarrow \) 12 is red    \\
                 &                 & \( (1, 1, 12, 12) \) creates a red solution.  \\
                  & \(R(3,12,2)=18\) & \( (2, 3, 3, 10) \Rightarrow \)10 is red     \\
                 &                 & \( (1, 1, 10, 12) \Rightarrow \)12 is blue     \\
                 &                 & \( (2, 3, 12, 14) \Rightarrow \) 14 is red    \\
                 &                 & \( (1, 1, 14, 14) \) creates a red solution.  \\
             	& \(R(3,14,2)=21\) & \( 2, 2, 2, 10) \Rightarrow \) 10 is red     \\
                 &                 & \( (1, 1, 10, 13) \Rightarrow \)13 is blue     \\
                 &                 & \( (2, 3, 13, 16) \Rightarrow \) 16 is red    \\
                 &                 & \( (1, 1, 16, 16) \) creates a red solution.  \\
\end{tabular}
\end{table}
\newpage
\end{proof}

We have proven the lower and upper bound making the discrete 2-color Rado number for \( x_1 + x_2 + \dots + x_m + c = 2x_0 \):
\[
  R(m, c, 2) = 
    \begin{cases}
	\infty 													&	\text{for \(m\) even, \(c\) odd} \\
	 \big\lceil \frac{m}{2} \big\lceil \frac{m+c}{2} \big\rceil + \frac{c}{2} \big\rceil		&	\text{otherwise~.}
    \end{cases}
 \]

\section{ The Equation $x_1 + c = a x_0 $}%

We now consider the equation \(x_1 + c = ax_0\).  
For ease of our proof, we will first consider the continuous Rado number for this equation.
Recall that in the continuous case we color all the positive real numbers from 1 to the Rado number.
Let \(a\) and \(c\) be real numbers.
Let \(R = R_\mathbb{R}(m,c,a)\) denote the smallest real number $ R$ such that for any 2-coloring of the real numbers in \([1,R]\) there exists a monochromatic solution to equation $\mathcal{E}$.  We will prove the following:

\begin{theorem}
Let \(a\) and \(c\) be real numbers with \(a \geq 1\) and \(c>0\).  Then
$$ \begin{array}{cc} 
R_\mathbb{R}(1,c,a) =
 \begin{cases}
	\infty 					&	\text{for \(a=1\)} \\
	\frac{c}{a-1}				&	\text{otherwise~.}
\end{cases}    \end{array} $$
\end{theorem}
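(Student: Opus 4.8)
The plan is to prove the formula by splitting on the two cases $a=1$ and $a>1$, and in the latter case to establish matching upper and lower bounds for the value $p := \frac{c}{a-1}$. When $a=1$ the equation reads $x_1 + c = x_0$, i.e.\ $x_0 = x_1 + c$, and I would exhibit a coloring of all of $[1,\infty)$ with no monochromatic solution: assign the block $[1+kc,\,1+(k+1)c)$ the color $k \bmod 2$ for each integer $k \geq 0$. Since $x_0 = x_1 + c$ always lies one block above $x_1$, the two points get opposite colors, so no $[1,R]$ admits a monochromatic solution and $R_\mathbb{R}(1,c,1) = \infty$. For $a>1$, write $f(x) = \frac{x+c}{a}$, so that a solution is exactly a pair $(x_1,x_0) = (x, f(x))$ of equally colored points. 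The map $f$ is an increasing contraction with unique fixed point $p = \frac{c}{a-1}$, and the identity $f(x) - x = \frac{c-(a-1)x}{a}$ shows $f(x) > x$ precisely when $x < p$; hence for $x \in [1,p)$ one has $x < f(x) < p$, while $x = p$ gives the degenerate pair $(p,p)$.

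For the upper bound I observe that $(p,p)$ is itself a solution, since $p + c = ap$, in which $x_1$ and $x_0$ are literally the same number and therefore the same color. As $p$ is the right endpoint of $[1,p]$, every $2$-coloring of $[1,p]$ contains this monochromatic solution, so $R_\mathbb{R}(1,c,a) \leq p$.

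The heart of the proof — and the step I expect to be the main obstacle — is the lower bound, which requires a single coloring of $[1,p)$ that kills every solution. The key device is the substitution $u = p - x$, carrying $x \in [1,p)$ to $u \in (0,\,p-1]$; using $p + c = ap$ one checks that $f$ becomes the pure scaling $u \mapsto u/a$, so each solution pair $(x,f(x))$ corresponds to a pair $\{u,\,u/a\}$. I would then color $x$ according to the parity of $\big\lfloor \log_a(p-x) \big\rfloor$. Because $\log_a(u/a) = \log_a u - 1$, the two members of any solution pair sit on consecutive integer levels and hence always receive opposite colors, so the coloring has no monochromatic solution on $[1,p)$; this forces $R_\mathbb{R}(1,c,a) \geq p$. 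The delicate point is verifying that the transported map is \emph{exactly} the scaling $u \mapsto u/a$ with no lower-order error, which the fixed-point identity $p+c=ap$ guarantees, and confirming that both $x$ and $f(x)$ genuinely lie in $[1,p)$ (immediate from $1 \leq x < f(x) < p$) so that the coloring actually sees both endpoints.

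Combining the two bounds yields $R_\mathbb{R}(1,c,a) = p = \frac{c}{a-1}$. I would note that the $a>1$ analysis tacitly works in the regime $p \geq 1$ (equivalently $c \geq a-1$), in which $[1,p]$ is a genuine coloring domain and the displayed formula is the operative one; outside this regime the interval degenerates and the statement is vacuous.
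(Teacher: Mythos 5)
Your proposal is correct and follows essentially the same route as the paper: the $a=1$ case uses the identical translation-block coloring, the upper bound uses the same fixed-point observation that $\left(\frac{c}{a-1},\frac{c}{a-1}\right)$ is a forced monochromatic solution, and your lower-bound coloring by the parity of $\big\lfloor \log_a(p-x) \big\rfloor$ is exactly the paper's coloring by alternating geometric intervals shrinking toward $\frac{c}{a-1}$, just expressed through the change of variables $u = p-x$ instead of explicit endpoints. Your closing caveat about the regime $p \geq 1$ (i.e.\ $c \geq a-1$) is a fair point that the paper leaves implicit, since its first interval $\big[1,\frac{c+1}{a}\big)$ also degenerates otherwise.
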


\begin{lemma}
For \(a >1 \) and \(c>0\) we have \(R_{\mathbb{R}}(1,c,a) \geq \frac{c}{a-1} \).
\end{lemma}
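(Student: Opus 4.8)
The plan is to establish the lower bound by exhibiting an explicit $2$-coloring of the half-open interval $[1, \tfrac{c}{a-1})$ that admits no monochromatic solution to $x_1 + c = a x_0$. Since every $R' < \tfrac{c}{a-1}$ gives a colored set $[1,R'] \subseteq [1,\tfrac{c}{a-1})$, the restriction of such a coloring still avoids monochromatic solutions, so no $R' < \tfrac{c}{a-1}$ can be the Rado number, forcing $R_{\mathbb R}(1,c,a) \ge \tfrac{c}{a-1}$. (If $\tfrac{c}{a-1} \le 1$ the interval is empty and the bound is immediate from $R_{\mathbb R} \ge 1$, so I would assume $\tfrac{c}{a-1} > 1$.)

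First I would record that solving for $x_0$ writes every solution as $(x_1, x_0) = (x_1, T(x_1))$, where $T(x) = \tfrac{x+c}{a}$ is the affine map whose unique fixed point is $p := \tfrac{c}{a-1}$. A direct computation gives $p - T(x) = \tfrac{1}{a}(p - x)$, so in the shifted coordinate $y := p - x$ the map $T$ becomes simply $y \mapsto y/a$. Thus a pair $x_1, x_0$ with $x_0 = T(x_1)$ corresponds to a pair $y, y/a$, and avoiding a monochromatic solution on $[1,p)$ is equivalent to coloring the $y$-values in $(0, p-1]$ so that $y$ and $y/a$ always receive different colors.

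The coloring I would use is the geometric band (parity) coloring: color $x \in [1,p)$ red if $\lfloor \log_a(p - x) \rfloor$ is even and blue if it is odd. Since $\log_a\!\big(\tfrac{y}{a}\big) = \log_a y - 1$, we get $\lfloor \log_a(y/a)\rfloor = \lfloor \log_a y\rfloor - 1$, so $y$ and $y/a$ lie in consecutive bands of opposite parity and hence receive opposite colors. Because $x_1 < p$ forces $T(x_1) > x_1 \ge 1$ and $T(x_1) < p$, both coordinates of any solution lie in $[1,p)$ and are colored, yet they differ in color; no monochromatic solution exists. This establishes $R_{\mathbb R}(1,c,a) \ge \tfrac{c}{a-1}$.

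The main obstacle is recognizing that a single-threshold two-band coloring does \emph{not} suffice: coloring $[1,t)$ red and $[t,p)$ blue clears the red block (its $T$-images escape above $t$), but every $x_1 \ge t$ has $T(x_1) \in (x_1, p)$ still blue, so a blue solution always survives. Passing to the distance-to-fixed-point coordinate, which linearizes $T$ to a pure scaling by $1/a$, is what makes the alternating geometric bands natural and the parity check routine; the only remaining care is the boundary bookkeeping between bands and the trivial case $p \le 1$.
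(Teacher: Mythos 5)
Your proof is correct and takes essentially the same approach as the paper: the paper's coloring of $\left[1, \frac{c}{a-1}\right)$ uses alternating intervals whose endpoints are exactly the forward orbit of $1$ under $T(x)=\frac{x+c}{a}$, i.e.\ geometric bands accumulating at the fixed point, which is precisely your parity-of-$\lfloor \log_a(p-x)\rfloor$ coloring up to where the bands are anchored. Your explicit change of coordinates $y = p - x$, which linearizes $T$ to $y \mapsto y/a$, simply makes transparent the scaling structure that the paper's interval endpoints encode implicitly.
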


\begin{proof}  Consider the following intervals for coloring:
\[ \Big[1, \frac{c+1}{a}\Big), 
\Big[\frac{c+1}{a}, \frac{c(a^2-1)+a-1}{a^2(a-1)}\Big),  
\]
\[
\Big[ \frac{c(a^2-1)+a-1}{a^2(a-1)}, \frac{c(a^3-1)+a-1}{a^3(a-1)}\Big), \dots, 
\]
\[
 \Big[ \frac{c(a^k-1) + a - 1}{a^k(a-1)}, \frac{c(a^{k+1}-1) + a - 1}{a^{k+1}(a-1)} \Big), \dots 
 \]
\par\noindent Note that $ \lim_{k \to \infty} \frac{c(a^{k+1}-1) + a - 1}{a^{k+1}(a-1)} = \frac{c}{a-1} $ and thus we have a coloring of $\left[ 1,R \right) $.

Color the number in the interval \( \Big[ \frac{c(a^k-1) + a - 1}{a^k(a-1)}, \frac{c(a^{k+1}-1) + a - 1}{a^{k+1}(a-1)} \Big)  \) red if \(k\) is even and blue if \(k\) is odd.  
We claim that this coloring will avoid a monochromatic solution: if \(x_1\) is in the interval  \(\Big[ \frac{c(a^k-1) + a - 1}{a^k(a-1)}, \frac{c(a^{k+1}-1) + a - 1}{a^{k+1}(a-1)} \Big) \), then

\begin{table}[h!]
\begin{center}
\begin{tabular}{r r c c c l}
 		 		& \( \displaystyle\frac{c(a^k-1) + a - 1}{a^k(a-1)} \) 								& \( \leq \) 	&  \( x_1 	\)			& \( < \)	& \(  \displaystyle\frac{c(a^{k+1}-1) + a - 1}{a^{k+1}(a-1)} \) \\
\( \Rightarrow \) 	& \( \displaystyle \frac{1}{a} \big( \frac{c(a^k-1) + a - 1}{a^k(a-1)} + c \big) \) 		& \( \leq \)	& \(  \displaystyle\frac{x_1 + c}{a} \)	& \( < \)	& \(  \displaystyle\frac{1}{a} \big( \frac{c(a^{k+1}-1) + a - 1}{a^{k+1}(a-1)} + c \big) \) \\
\( \Rightarrow \)	& \(  \displaystyle\frac{c(a^{k+1}-1) + a - 1}{a^{k+1}(a-1)} \)						& \( \leq \)	& \( x_0 \)			& \( <\) 	 & \(  \displaystyle\frac{c(a^{k+2}-1) + a - 1}{a^{k+2}(a-1)} ~. \)
\end{tabular}
\end{center}
\end{table}

Therefore \(x_0\) must be in the next interval, thus avoiding a monochromatic solution, making \(\frac{c}{a-1}\) a lower bound for \(x_1 + c = ax_0\).
\end{proof}

\begin{lemma}
For \(a >1 \) and \(c>0\), we have
\[
  R_{\mathbb{R}}(1, c, a) \leq
    \begin{cases}
	\infty 			&	\text{for \(a=1\)} \\
	\frac{c}{a-1}		&	\text{otherwise.}
    \end{cases}
 \]
\end{lemma}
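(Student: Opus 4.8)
The plan is to exploit a feature that is special to the continuous setting: because we color the \emph{closed} interval $[1,R]$, we may color the exact endpoint $R = \frac{c}{a-1}$, which is precisely the fixed point of the substitution defining the equation. First I would observe that $R = \frac{c}{a-1}$ itself satisfies $x_1 + c = a x_0$ when $x_1 = x_0 = R$: indeed $R + c = \frac{c}{a-1} + c = \frac{ca}{a-1} = aR$. Thus the pair $(R,R)$ is a genuine solution of the equation in which both coordinates equal $R$.

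Next, for any $2$-coloring of $[1,R]$ with $R = \frac{c}{a-1}$, the number $R$ receives one of the two colors. Since $x_1 = x_0 = R$ in the solution $(R,R)$, both coordinates automatically share that color, so $(R,R)$ is a monochromatic solution. Hence every $2$-coloring of $[1,R]$ admits a monochromatic solution, which gives $R_{\mathbb{R}}(1,c,a) \leq \frac{c}{a-1}$; combined with the preceding lower-bound lemma this pins down the value and establishes the theorem in the relevant case $a > 1$ (the $a=1$ branch lies outside the hypothesis and contributes only the vacuous bound $\infty$).

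The conceptual reason this is the sharp threshold is worth recording alongside the computation: the map $f(x) = \frac{x+c}{a}$ is a contraction with unique fixed point $R = \frac{c}{a-1}$, and for $x_1 < R$ one has $f(x_1) > x_1$, so the alternating interval coloring used in the lower bound always pushes $x_0 = f(x_1)$ into the \emph{next} interval and never creates a monochromatic pair below $R$. The obstruction only disappears at the limit point $R$, where $f(R) = R$ forces the two coordinates to coincide. In contrast to the lengthy case analysis of the discrete theorem, there is essentially no combinatorial obstacle here. The only point genuinely requiring care is that $R = \frac{c}{a-1}$ must lie in the colored domain, i.e.\ that $R \geq 1$; this holds exactly when $c \geq a-1$, which is the same regime in which the lower-bound coloring of $[1,\frac{c}{a-1})$ is nondegenerate, so the two lemmas are consistent and the argument above closes the theorem.
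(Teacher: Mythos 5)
Your proof is correct and is essentially the paper's own argument: for $a>1$ the paper simply observes that $\left(\frac{c}{a-1},\frac{c}{a-1}\right)$ is a solution of $x_1+c=ax_0$, so since both coordinates are the same number, every $2$-coloring of $\left[1,\frac{c}{a-1}\right]$ makes it monochromatic. The only difference is peripheral: the paper's proof of this lemma also constructs an alternating-interval coloring for $a=1$ (which is really lower-bound content, establishing $R_{\mathbb{R}}(1,c,1)=\infty$ for the theorem), whereas you correctly note that the $a=1$ branch of the upper-bound statement is vacuous; your added remarks on the contraction $f(x)=\frac{x+c}{a}$ and the caveat $\frac{c}{a-1}\geq 1$ are sound observations that the paper leaves implicit.
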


\begin{proof} 
Starting with \(a=1\), consider the following intervals for coloring: 
\begin{center}
\( [1, c+1), [c+1, 2c+1), \dots, [kc+1, (k+1)c+1), \dots ~. \) 
\end{center}
Color the number in the interval \( [kc+1, (k+1)c+1) \) red if \(k\) is even and blue if \(k\) is odd.  We claim that this coloring will avoid a monochromatic solution.

Let \(x_1\) be in the interval \( [kc+1, (k+1)c+1) \).  Then
\begin{table}[h!]
\begin{center}
\begin{tabular}{r r c c c l}
		& \(  kc+1 \) 			& \( \leq \) & \( x_1 \) 	& \( < \) & 	\( (k+1)c+1 \) \\
\( \iff \)	& \( kc+1 +c \) 		& \( \leq \) & \( x_1+c \) & \( < \) &	\( (k+1)c+1 +c \) \\
\( \iff \)	& \( (k+1)c + 1 \)		& \( \leq \) & \( x_0 \)	& \( < \) & 	\( (k+2)c+1~. \)
\end{tabular}
\end{center}
\end{table}

Therefore \(x_0\) must be in the next interval, thus avoiding a monochromatic solution, and making \(R(1,c,1) = \infty \).

Now suppose \(a>1\).
Since \( ( \frac{c}{a-1}, \frac{c}{a-1} )\) is a solution to the equation there exists no way to color \( \frac{c}{a-1} \) to avoid a monochromatic solution.  
\end{proof}

We have proven the lower and upper bound making the continuous 2-color Rado number for \( x_1 + c = ax_0 \):
\[
  R_{\mathbb{R}} (1, c, a) = 
    \begin{cases}
	\infty 			&	\text{for \(a=1\)} \\
	 \frac{c}{a-1}		&	\text{otherwise~.}
    \end{cases}
 \]

\par
Note that for each coloring of the real numbers in the interval $[1, R )$ which avoids a monochromatic solution, restricting the coloring to just the integers in $[1, R )$ will also avoid a monochromatic integer solution to our equation. Thus, if $a-1$ divides $c$, then we will have $R (1, c, a) = \frac{c}{a-1}$.
\par
On the other hand, if $a-1$ does not divide $c$, then the discrete Rado number will be infinite. To establish this, we essentially need two colorings: one coloring for the integers from 1 to $\left\lfloor \frac{c}{a-1} \right\rfloor $, and then another for all of the integers greater than $\frac{c}{a-1}$. This is because $x_1$ and $x_0$ must lie on the same side of $\frac{c}{a-1}$:
$$x_1 < \frac{c}{a-1} \Rightarrow x_0 = \frac{1}{a} \left( x_1 + c \right) < \frac{1}{a} \left( \frac{c}{a-1} + c \right) = \frac{c}{a-1},$$
while 
$$x_1 > \frac{c}{a-1} \Rightarrow x_0 = \frac{1}{a} \left( x_1 + c \right) > \frac{1}{a} \left( \frac{c}{a-1} + c \right) = \frac{c}{a-1}.$$ 
Thus, the coloring from Lemma 3, restricted to the integers, avoids a monochromatic solution invoving integers between 1 and $\frac{c}{a-1}$. For the integers greater than $\frac{c}{a-1}$, let $t = \left\lceil \frac{c}{a-1} \right\rceil $ and consider the intervals 
$$\left[t , at - c - 1\right] , \left[ at-c , a^2 t - ac - c - 1 \right] , $$
$$\left[ a^2t - ac - c , a^3 t - a^2 c - a c - c - 1 \right] , \dots $$
$$\left[ a^k t - \frac{a^k - 1}{a-1}c , a^{k+1} t - \frac{a^{k+1} - 1}{a-1}c - 1 \right] , \dots . $$
As with the proof of Lemma 3, color the integers in the interval 
$$\left[ a^k t - \frac{a^k - 1}{a-1}c , a^{k+1} t - \frac{a^{k+1} - 1}{a-1}c - 1 \right] $$
red if $k$ is even and blue if $k$ is odd. This will avoid a monochromatic solution involving integers greater than $\frac{c}{a-1}$ since
\begin{table}[h!]
\begin{center}
\begin{tabular}{r r c c c l}
				& \( a^{k-1} t - \displaystyle\frac{a^{k-1} - 1}{a-1}c \)	& \( \leq \)	& \( x_0 \) 		& \( < \) 	& \( a^k t - \displaystyle\frac{a^k - 1}{a-1}c \) \\
\( \Rightarrow \)	& \( a^k t - \displaystyle\frac{a^k - a}{a-1}c - c \)		& \( \leq \)	& \( a x_0 -c \) 	& \( < \)	& \( a^{k+1} t - \displaystyle\frac{a^{k+1} - a}{a-1}c - c \) \\
\( \Rightarrow \)	& \( a^k t - \displaystyle\frac{a^k - 1}{a-1}c \)		& \( \leq \)	& \( x_1 \) 		& \( < \)	& \( a^{k+1} t - \displaystyle\frac{a^{k+1} - a}{a-1}c  ~. \)\\
\end{tabular}
\end{center}
\end{table}

Consequently, we have 
\[
  R(1, c, a) = 
    \begin{cases}
    	\frac{c}{a-1}	  			&	\text{if \( \left( a-1 \right)  \mid c\)} \\ 
	\infty 					&	\text{otherwise.}
    \end{cases}
 \]


\section{Conclusion}%

One limitation of our result is that we require \(c > 0\).  However, this restriction can almost certainly be loosened. There are many negative values of $c$ for which our result still holds. For example, the reader can check that for $m=50$ and $c=-10$, the arguments in Case II.A, Case II.B.1, and Case II.B.2 still work.  
\par
In the situation where $c = 2-m$, our formula yields $R(m,c,2) = 1$, which makes sense as replacing every variable with 1 yields an immediate (and necessarily monochromatic) solution. But when $c$ is slightly larger than $2-m$, the lower bound coloring in Lemma 1 can be extended. As a specific example, for equation $\mathcal{E}$ when $m=50$ and $c=-46$, coloring $\left\{ 1, 27 \right\}$ red and $ \left\{ 2, 3, \dots , 26 \right\}$ blue will avoid a monochromatic solution, and it turns out that the Rado number is 28. So, for this ``small'' value of $c$, a third block appears in the coloring, very similar to the coloring of Beutelspacher and Brestovansky in \cite{BB}. It would be nice to find for which values of $c$ this third block disappears.

\end{document}